\newcommand{\Mod}[1]{\ (\mathrm{mod}\ #1)}
\newtheorem{theorem}{Theorem}[section]
\newtheorem{lemma}[theorem]{Lemma}
\newtheorem{corollary}[theorem]{Corollary}
\theoremstyle{definition}
\theoremstyle{remark}
\numberwithin{equation}{section}
\begin{document}
\title[Distribution of  Andrews' Singular Overpartitions $\overline{C}_{p,1}(n)$ ]{Distribution of  Andrews' Singular Overpartitions $\overline{C}_{p,1}(n)$}
\author{Chiranjit Ray}
\address{Mathematics and Data Science Group, Indian Institute of Information Technology Sri City,
Tirupati District - 517 646, Andhra Pradesh, India}
\curraddr{}
\email{chiranjitray.m@iiits.in}

\subjclass[2010]{Primary: 05A17, 11P81.}
\date{December 2, 2021}
\keywords{Singular  Overpartitions; Parity;  Distribution}
\thanks{} 
\begin{abstract}
Andrews introduced the partition function $\overline{C}_{k, i}(n)$, called singular overpartition, which counts the number of overpartitions of $n$ in which no part is divisible by $k$ and only parts $\equiv \pm i\pmod{k}$ may be overlined. We study the  parity and  distribution results for $\overline{C}_{k,i}(n),$ where $k>3$ and $1\leq i \leq \left\lfloor\frac{k}{2}\right\rfloor$. More particularly,  we prove that for each integer $\ell\geq 2$ depending on $k$ and $i$, the interval $\left[\ell, \frac{\ell(3\ell+1)}{2}\right]$ $\Big($resp.\  $\left[2\ell-1, \frac{\ell(3\ell-1)}{2}\right]
\Big)$ contains an integer $n$ such that $\overline{C}_{k,i}(n)$ is even (resp.\ odd). Finally we study the distribution for $\overline{C}_{p,1}(n)$ where $p\geq 5$ be a prime number.
\end{abstract}
\maketitle
\section{Introduction and statement of results}
A partition $\lambda$ of a positive integer $n$ is a non-increasing sequence of positive integers
$\lambda_1, \lambda_2, \dots, \lambda_k$ whose sum is $n$. Each $\lambda_i$ is called a part of the partition $\lambda$.
In \cite{corteel2004}, Corteel and Lovejoy introduced overpartitions. An overpartition of $n$ is a non-increasing sequence of natural numbers whose sum
is $n$ in which the first occurrence of a number may be overlined.
Andrews~\cite{andrews2015}  defined a new type of  overpartitions,  called singular overpartition to give the overpartition analogous to Rogers--Ramanujan type theorems for ordinary partitions with restricted successive ranks. 
The singular overpartition, $\overline{C}_{k,i}(n)$, 
counts the number of overpartitions of $n$ in which no part is divisible by $k$ and only parts $\equiv \pm i\pmod{k}$ may be overlined. For example, $\overline{C}_{3, 1}(4)=10$ with the relevant partitions being $4, \overline{4}, 2+2, \overline{2}+2, 2+1+1, \overline{2}+1+1, 2+\overline{1}+1, \overline{2}+\overline{1}+1, 1+1+1+1, \overline{1}+1+1+1$. 
For $k\geq 3$ and $1\leq i \leq \left\lfloor\frac{k}{2}\right\rfloor$, the generating function of $\overline{C}_{k,i}(n)$ was derived as follows:
\begin{align*}
	\sum_{n=0}^{\infty}\overline{C}_{k, i}(n)q^n &=\frac{(q^k; q^k)_{\infty}(-q^i; q^k)_{\infty}(-q^{k-i}; q^k)_{\infty}}{(q; q)_{\infty}},
\end{align*}
where $\displaystyle (a; q)_{\infty}:= \prod_{j=0}^{\infty}(1-aq^j)$. The generating functions for $\overline{C}_{3k, k}(n)$, $\overline{C}_{4k, k}(n)$ and $\overline{C}_{6k, k}(n)$ can be written nicely so that we can use various $q$-series identities and theory of modular forms to study the arithmetic and density properties of these three types of singular overpartitions.
For example, the generating function for $\overline{C}_{6k, k}(n)$ can be written as:

\begin{align*}
	\sum_{n=0}^{\infty}\overline{C}_{6k, k}(n)q^n &=\frac{(q^{6k}; q^{6k})_{\infty}(-q^{k}; q^{6k})_{\infty}(-q^{5k}; q^{6k})_{\infty}}{(q; q)_{\infty}}\\
	&=\frac{(q^{6k}; q^{6k})_{\infty}}{(q; q)_{\infty}} \left(\frac{(-q^{k}; q^{6k})_{\infty}(-q^{5k}; q^{6k})_{\infty}(-q^{3k}; q^{6k})_{\infty}}{(-q^{3k}; q^{6k})_{\infty}}\right)\\
	&=\frac{(q^{6k}; q^{6k})_{\infty}}{(q; q)_{\infty}} \left(\frac{(-q^{k}; q^{2k})_{\infty}}{(-q^{3k}; q^{6k})_{\infty}}\right)\\
	&=\frac{(q^{6k}; q^{6k})_{\infty}}{(q; q)_{\infty}} \left(\frac{(-q^{k}; q^{2k})_{\infty}(-q^{2k}; q^{2k})_{\infty}}{(-q^{3k}; q^{6k})_{\infty}(-q^{6k}; q^{6k})_{\infty}} \times \frac{(-q^{6k}; q^{6k})_{\infty}}{(-q^{2k}; q^{2k})_{\infty}}\right)\\
	&=\frac{(q^{6k}; q^{6k})_{\infty}}{(q; q)_{\infty}} \left(\frac{(-q^{k}; q^{k})_{\infty}(-q^{6k}; q^{6k})_{\infty}}{(-q^{2k}; q^{2k})_{\infty}(-q^{3k}; q^{3k})_{\infty}}\right)\\ 
	&=\frac{(q^{6k}; q^{6k})_{\infty}}{(q; q)_{\infty} } \left( \frac{(q^{2k}; q^{2k})_{\infty}(q^{12k}; q^{12k})_{\infty}}{(q^{4k}; q^{4k})_{\infty}(q^{6k}; q^{6k})_{\infty}} \times \frac{(q^{2k}; q^{2k})_{\infty}(q^{3k}; q^{3k})_{\infty}}{(q^{k}; q^{k})_{\infty}(q^{6k}; q^{6k})_{\infty}}\right)\\ 
	&=\frac{(q^{2k}; q^{2k})_{\infty}^2(q^{3k}; q^{3k})_{\infty}(q^{12k}; q^{12k})_{\infty}}{(q; q)_{\infty}(q^{k}; q^{k})_{\infty}(q^{4k}; q^{4k})_{\infty}(q^{6k}; q^{6k})_{\infty}}.\\
\end{align*}
Similarly, for $\overline{C}_{4k, k}(n)$  and $\overline{C}_{3k, k}(n)$  we have respectively
\begin{align*}
(-q^{k}; q^{4k})_{\infty}(-q^{3k}; q^{4k})_{\infty} &=\frac{(q^{2k}; q^{2k})_{\infty}^2}{(q^{k}; q^{k})_{\infty}(q^{4k}; q^{4k})_{\infty}}
\end{align*}
\text{and}
\begin{align*}
(-q^{k}; q^{3k})_{\infty}(-q^{2k}; q^{3k})_{\infty} &=\frac{(q^{2k}; q^{2k})_{\infty}(q^{3k}; q^{3k})_{\infty}}{(q^{k}; q^{k})_{\infty}(q^{6k}; q^{6k})_{\infty}}.
\end{align*}

The singular overpartitions $\overline{C}_{3k, k}(n)$, $\overline{C}_{4k, k}(n)$ and $\overline{C}_{6k, k}(n)$  for some positive integer $k$ are well studied in the literature. Chen, Hirschhorn and Sellers \cite{chen2015} showed that $\overline{C}_{3, 1}(n)$ and $\overline{C}_{4, 1}(2n+1)$ are
 always even. They also proved that $\overline{C}_{6, 2}(n)$ is even (or odd) if and only if $n$ is not (or is) a pentagonal number. In \cite{ahmed2015}, Ahmed and Baruah found congruences modulo $4$, $18$ and $36$ for $\overline{C}_{3, 1}(n)$, infinite families of congruences modulo $2$ and $4$ for $\overline{C}_{8, 2}(n)$, congruences modulo $2$ and $3$ for $\overline{C}_{12, 2}(n)$ and $\overline{C}_{12, 4}(n)$ and congruences modulo $2$ for $\overline{C}_{24, 8}(n)$ and $\overline{C}_{48, 16}(n)$. Naika and Gireesh \cite{naika2016} proved congruences for $\overline{C}_{3, 1}(n)$ modulo $6$, $12$, $16$, $18$, and $24$. They also found infinite families of congruences for $\overline{C}_{3, 1}(n)$ modulo $12$, $18$, $48$, and $72$. In \cite{barman2018}, Barman and the author affirm a conjecture of Naika and Gireesh by proving that $\overline{C}_{3,1}(12n+11)\equiv 0\pmod{144}$ for all $n\geq 0$. Later Barman and the author \cite{rupam2019}  studied the distribution for $\overline{C}_{3,1}(n)$ and proved that $\overline{C}_{3,1}(n)$ is almost always divisible by $2^k$ and $3^k$ for any positive integer $k$. This result was recently studied for more general class of singular overpartition $\overline{C}_{3k, k}(n)$ for some $k$ by Singh and Barman~\cite{singh2021.2, singh2021}.
 
Note that the arithmetic and density properties of singular overpartitions $\overline{C}_{k, i}(n)$ apart from the above three types are not considered in literature well. In this article, we study the parity and distribution results for  $\overline{C}_{p,1}(n),$  where $p\geq 5$ be a prime number. More precisely, we obtain the following results.  

 \begin{theorem}\label{theorem222.coro1}
If  $\ell\geq 2$ is a positive integer such that $\ell\equiv1 \Mod 3$ and  $p\geq 5$ is a prime, then $\ell(3\ell+1)$ is not of the form $\big(pm^2 \pm  m(p-2)\big)$  for any positive integer $m$. Furthermore, there are infinitely many integers $n$ for which $\overline{C}_{p, 1}(n)$ is an even integer.
 \end{theorem}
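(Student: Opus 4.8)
My plan is to handle the two assertions in turn, using the arithmetic one to drive the parity one.

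For the arithmetic assertion I would simply reduce modulo $3$. Since $\ell\equiv 1\pmod 3$ we have $\ell(3\ell+1)=3\ell^2+\ell\equiv 1\pmod 3$. On the other side $p\equiv 1$ or $2\pmod 3$ (both occur for $p\ge 5$). If $p\equiv 1\pmod 3$ then $p-2\equiv -1$, so $pm^2\pm(p-2)m\equiv m^2\mp m=m(m\mp 1)\pmod 3$, a product of two consecutive integers and hence in $\{0,2\}\pmod 3$; if $p\equiv 2\pmod 3$ then $p-2\equiv 0$, so $pm^2\pm(p-2)m\equiv 2m^2\in\{0,2\}\pmod 3$. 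In every case $pm^2\pm(p-2)m\not\equiv 1\pmod 3$, while $\ell(3\ell+1)\equiv 1$, so the two can never coincide. This settles the first statement outright.

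For the parity statement I would first record $C(q):=\sum_{n\ge0}\overline{C}_{p,1}(n)q^n$ modulo $2$. Because $-1\equiv 1\pmod2$ the overlined factors satisfy $(-q^{j};q^{p})_\infty\equiv(q^{j};q^{p})_\infty$, so
\[
(q;q)_\infty C(q)\equiv (q^{p};q^{p})_\infty(q;q^{p})_\infty(q^{p-1};q^{p})_\infty\pmod 2 .
\]
The right-hand side is a Jacobi triple product (base $q^{p}$, variable $q$), equal to $\sum_{m\in\mathbb Z}(-1)^m q^{(pm^2-(p-2)m)/2}$. Reducing modulo $2$ and expanding $(q;q)_\infty$ by Euler's pentagonal number theorem, I obtain
\[
\Big(\sum_{j\in\mathbb Z}q^{g_j}\Big)C(q)\equiv\sum_{m\in\mathbb Z}q^{(pm^2-(p-2)m)/2}\pmod 2,\qquad g_j:=\tfrac{j(3j-1)}{2}.
\]
Comparing coefficients of $q^n$ yields the master congruence $\sum_{j:\,g_j\le n}\overline{C}_{p,1}(n-g_j)\equiv a(n)\pmod 2$, where $a(n)$ counts, modulo $2$, the integers $m$ with $2n=pm^2-(p-2)m$. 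The exponents on the right are exactly $\tfrac12(pm^2\pm(p-2)m)$, so the arithmetic assertion says precisely that $a(g_{-\ell})=0$ for every $\ell\equiv1\pmod3$ with $\ell\ge2$ (note $g_{-\ell}=\ell(3\ell+1)/2$).

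I would then conclude by contradiction. Suppose $\overline{C}_{p,1}(n)$ were even for only finitely many $n$, all below some bound $B$. Choose $\ell\equiv1\pmod3$ with $\ell>B$ and evaluate the master congruence at $n=g_{-\ell}$. The admissible indices are $g_0<g_1<\dots\le g_{-\ell}$, exactly $2\ell+1$ of them, an odd count. The two largest generalized pentagonal numbers not exceeding $g_{-\ell}$ are $g_{-\ell}$ and $g_{\ell}$, with $g_{-\ell}-g_{\ell}=\ell$; hence, apart from the single term of argument $0$ (where $\overline{C}_{p,1}(0)=1$ is odd), every argument $g_{-\ell}-g_j$ is at least $\ell>B$ and so gives an odd value by hypothesis. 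Thus the left-hand side is a sum of $2\ell+1$ odd terms, i.e. $\equiv1\pmod2$, whereas $a(g_{-\ell})=0$ by the arithmetic assertion — a contradiction. Hence $\overline{C}_{p,1}(n)$ is even for infinitely many $n$. The routine inputs (the mod-$3$ check, the triple-product evaluation, the pentagonal expansion) are straightforward; the step I expect to demand the most care is the combinatorial bookkeeping of the last paragraph — confirming that the number of generalized pentagonal numbers up to $g_{-\ell}$ is odd and that the gap down to the next one is exactly $\ell$, which is what turns the single vanishing coefficient $a(g_{-\ell})=0$ into a genuine parity contradiction.
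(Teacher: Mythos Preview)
Your proposal is correct and rests on the same underlying mechanism as the paper (the mod-$2$ master congruence coming from the Jacobi triple product and Euler's pentagonal theorem, evaluated at $n=\ell(3\ell+1)/2$), but the two parts are packaged somewhat differently.

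For the arithmetic assertion your route is actually shorter than the paper's. The paper first completes the square, writes $v(v+p-2)=p\ell(3\ell+1)$, splits into cases according to whether $p\mid v$ or $p\mid(v+p-2)$, and only then reduces modulo~$3$. You bypass all of this by reducing $pm^{2}\pm(p-2)m$ modulo~$3$ directly; your case analysis on $p\bmod 3$ and $m\bmod 3$ is valid and gives the same conclusion that the expression is always $\equiv 0$ or $2$, never $1$.

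For the parity assertion the paper isolates an intermediate lemma (its Lemma~2.2) stating that whenever $\ell(3\ell+1)$ avoids the forbidden form there is an even value of $\overline{C}_{p,1}(n)$ in the interval $[\ell,\ell(3\ell+1)/2]$, and then builds a recursive sequence $a_k=\tfrac12 a_{k-1}(3a_{k-1}+1)$ of such $\ell$'s to produce infinitely many intervals. Your contradiction argument (assume only finitely many even values, all below $B$, and take a single $\ell\equiv 1\pmod 3$ with $\ell>B$) is exactly the content of that lemma's proof with the interval bookkeeping stripped away; in particular your counting of $2\ell+1$ generalized pentagonal numbers up to $g_{-\ell}$ and the gap $g_{-\ell}-g_{\ell}=\ell$ matches the paper's computation. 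The trade-off is that the paper's interval formulation immediately feeds into the quantitative Corollary giving $\gg\log\log X$ even values up to $X$, whereas your version yields only the qualitative ``infinitely many'' conclusion stated in the theorem.
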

 \begin{corollary}\label{theorem222.coro2}
 For every positive integer $n$ and prime number $p\geq 5$, we have
 	\begin{align*}
 	\left\{1\leq n \leq X: \overline{C}_{p, 1}(n)~ \text{is an even integer} \right\} \geq \alpha \log \log X,
 	\end{align*}
 	where $\alpha>0$ is a constant.
 \end{corollary}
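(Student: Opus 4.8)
The plan is to rewrite the generating function of $\overline{C}_{p,1}$ as a theta function divided by $(q;q)_\infty$, extract a signed recurrence via Euler's pentagonal number theorem, and pit it against the elementary mod-$3$ obstruction in the first half of Theorem~\ref{theorem222.coro1}. Starting from the product formula with $k=p$, $i=1$,
\[
\sum_{n\ge0}\overline{C}_{p,1}(n)q^n=\frac{(q^p;q^p)_\infty(-q;q^p)_\infty(-q^{p-1};q^p)_\infty}{(q;q)_\infty},
\]
I would apply the Jacobi triple product $(q;q)_\infty(-z;q)_\infty(-z^{-1}q;q)_\infty=\sum_{m\in\mathbb Z}z^mq^{m(m-1)/2}$ with $q$ replaced by $q^p$ and $z=q$, which gives
\[
(q^p;q^p)_\infty(-q;q^p)_\infty(-q^{p-1};q^p)_\infty=\sum_{m\in\mathbb Z}q^{\,(pm^2-(p-2)m)/2}.
\]
Writing $g_k=\tfrac{k(3k-1)}{2}$ and $E=\bigl\{\tfrac{pm^2-(p-2)m}{2}:m\in\mathbb Z\bigr\}$, and multiplying by $(q;q)_\infty=\sum_{k\in\mathbb Z}(-1)^kq^{g_k}$, comparison of the coefficient of $q^n$ yields, for every $n\ge0$,
\[
\sum_{k\in\mathbb Z}(-1)^k\,\overline{C}_{p,1}(n-g_k)=\begin{cases}1,&n\in E,\\ 0,&n\notin E,\end{cases}
\]
with $\overline{C}_{p,1}(j):=0$ for $j<0$ (the map $m\mapsto\tfrac{pm^2-(p-2)m}{2}$ is injective since $p>2$, so the right side is indeed $0$ or $1$). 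Replacing $m$ by $-m$ shows $2E=\{pm^2\pm m(p-2):m\ge0\}$.

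For the first assertion of Theorem~\ref{theorem222.coro1} I would reduce modulo $3$. If $\ell\equiv1\pmod{3}$ then $\ell(3\ell+1)\equiv1\cdot1\equiv1\pmod{3}$, whereas $pm^2\pm m(p-2)=m\bigl(pm\pm(p-2)\bigr)$ is never $\equiv1\pmod{3}$: since $p\ge5$ is prime, either $p\equiv1\pmod{3}$, whence $pm\pm(p-2)\equiv m\mp1$ and the product is $\equiv m(m-1)$ or $m(m+1)$, both lying in $\{0,2\}\pmod{3}$; or $p\equiv2\pmod{3}$, whence $p-2\equiv0$ and the product is $\equiv-m^2\in\{0,2\}\pmod{3}$. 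Hence $\ell(3\ell+1)\ne pm^2\pm m(p-2)$ for all positive $m$, which by the identification $2E=\{pm^2\pm m(p-2):m\ge0\}$ says exactly that the generalized pentagonal number $g_{-\ell}=\tfrac{\ell(3\ell+1)}{2}$ does not belong to $E$.

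For the second assertion, fix $\ell\equiv1\pmod{3}$ with $\ell\ge2$ and set $n=g_{-\ell}$. In the increasing list $g_0<g_1<g_{-1}<g_2<g_{-2}<\cdots$ one has $g_k\le g_{-\ell}$ precisely for the $2\ell+1$ indices $k\in\{-\ell,\dots,\ell\}$; since $n\notin E$, the recurrence gives $\sum_{k=-\ell}^{\ell}(-1)^k\overline{C}_{p,1}(n-g_k)=0$, so reducing modulo $2$ (where $(-1)^k\equiv1$) an \emph{even} number of the $2\ell+1$ integers $\overline{C}_{p,1}(n-g_k)$ are odd. As $2\ell+1$ is odd, an odd — in particular positive — number of them are \emph{even}. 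Now $k=-\ell$ contributes the argument $0$, and $\overline{C}_{p,1}(0)=1$ is odd, while every other argument satisfies $n-g_k\ge n-g_\ell=\ell$; hence some $n_\ell\in\bigl[\ell,\tfrac{\ell(3\ell+1)}{2}\bigr]$ has $\overline{C}_{p,1}(n_\ell)$ even. If $\overline{C}_{p,1}(n)$ were even for only finitely many $n$, all bounded by some $N_0$, then choosing $\ell\equiv1\pmod{3}$ with $\ell>N_0$ would force every $\overline{C}_{p,1}(n-g_k)$, $-\ell\le k\le\ell$, to be odd, contradicting the parity count; so $\overline{C}_{p,1}(n)$ is even for infinitely many $n$, which finishes Theorem~\ref{theorem222.coro1}. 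For Corollary~\ref{theorem222.coro2} one then iterates: put $\ell_1=4$ and let $\ell_{j+1}$ be the least integer $>\tfrac{\ell_j(3\ell_j+1)}{2}$ with $\ell_{j+1}\equiv1\pmod{3}$, so the intervals $\bigl[\ell_j,\tfrac{\ell_j(3\ell_j+1)}{2}\bigr]$ are pairwise disjoint and the $n_{\ell_j}$ are distinct; since $\ell_{j+1}\ll\ell_j^2$ we get $n_{\ell_j}\ll\ell_j^2\ll\exp(C2^j)$, so the number of $j$ with $n_{\ell_j}\le X$ is $\ge\alpha\log\log X$ for a suitable $\alpha>0$.

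The only genuinely non-routine step is the third one: one must locate an $n$ at which the signed pentagonal sum $\sum(-1)^k\overline{C}_{p,1}(n-g_k)$ vanishes — which is precisely the condition $n\notin E$ supplied by the mod-$3$ lemma — while the number of surviving terms is \emph{odd}, which is exactly why $n=g_{-\ell}$ (with $2\ell+1$ generalized pentagonal numbers below it) is the right choice and not, say, $n=g_\ell$ (which would leave the even number $2\ell$ of terms). The Jacobi triple product reduction, the pentagonal recurrence, and the bookkeeping of which arguments are small are all mechanical.
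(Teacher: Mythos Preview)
Your proof is correct and follows essentially the same architecture as the paper: the Jacobi triple product plus Euler's pentagonal theorem give the recurrence (the paper's Lemma~2.1), the parity count at $n=g_{-\ell}$ forces an even value in $[\ell,\tfrac{\ell(3\ell+1)}{2}]$ (the paper's Lemma~2.2), and iterating with $\ell_{j+1}\ll\ell_j^2$ yields the $\log\log X$ bound. One notable improvement: your mod-$3$ obstruction is a direct two-case computation of $m(pm\pm(p-2))\pmod 3$, whereas the paper detours through the discriminant $(p-2)^2+4p\ell(3\ell+1)$ being a square, factors $v(v+p-2)=p\ell(3\ell+1)$, and splits on which factor $p$ divides---your route is shorter and more transparent.
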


  \begin{theorem}\label{theorem111.coro1}
If  $\ell\geq 2$ is a positive integer such that $\ell\equiv2 \Mod 3$ and  $p\geq 5$ is a prime, then $\ell(3\ell-1)$ is not of the form $\big(pm^2 \pm  m(p-2)\big)$  for any positive integer $m$. Furthermore, there are infinitely many integers $n$ for which $\overline{C}_{p, 1}(n)$ is an odd integer.
 \end{theorem}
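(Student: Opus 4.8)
The plan is to split the statement into its two asserted parts: the elementary arithmetic claim, which is a congruence obstruction modulo $3$, and the infinitude claim, which follows by reducing the generating function modulo $2$ to a lacunary theta series, substituting Euler's pentagonal number theorem, and using the arithmetic claim to isolate a single surviving term at a well-chosen index.

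\emph{The arithmetic claim.} For any $\ell$, $3\ell-1\equiv-1\pmod 3$, so if $\ell\equiv 2\pmod 3$ then $\ell(3\ell-1)\equiv 2\cdot(-1)\equiv 1\pmod 3$. On the other hand, write $pm^2\pm m(p-2)=pm(m\pm1)\mp 2m$; since $p\ge 5$ is prime, $p\not\equiv 0\pmod 3$, and a short check over the residues of $p$ and of $m$ modulo $3$ (noting $m(m\pm1)\in\{0,2\}\pmod 3$ always) shows $pm(m\pm1)\mp 2m\in\{0,2\}\pmod 3$. Hence $\ell(3\ell-1)$ is never of the form $pm^2\pm m(p-2)$; the same modulo-$3$ computation underlies the first part of Theorem~\ref{theorem222.coro1}.

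\emph{The generating function modulo $2$.} Multiplying the stated product by $(q;q)_\infty$ and using $(-a;q)_\infty\equiv(a;q)_\infty\pmod 2$ gives
\[
(q;q)_\infty\sum_{n\ge 0}\overline{C}_{p,1}(n)q^n=(q^p;q^p)_\infty(-q;q^p)_\infty(-q^{p-1};q^p)_\infty\equiv(q;q^p)_\infty(q^{p-1};q^p)_\infty(q^p;q^p)_\infty\pmod 2.
\]
The Jacobi triple product $(z;q)_\infty(q/z;q)_\infty(q;q)_\infty=\sum_{m\in\mathbb{Z}}(-1)^m q^{m(m-1)/2}z^m$, with $q\mapsto q^p$ and $z=q$, turns the right-hand side into $\sum_{m\in\mathbb{Z}}(-1)^m q^{(pm^2-(p-2)m)/2}$. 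Since signs vanish modulo $2$ and $(q;q)_\infty\equiv\sum_{k\in\mathbb{Z}}q^{k(3k-1)/2}\pmod 2$, comparing coefficients of $q^N$ yields, for every $N\ge 0$,
\[
\sum_{k\in\mathbb{Z}}\overline{C}_{p,1}\!\left(N-\frac{k(3k-1)}{2}\right)\equiv
\begin{cases}1,& 2N=pm^2\pm m(p-2)\ \text{for some integer }m,\\ 0,&\text{otherwise,}\end{cases}\pmod 2,
\]
with the convention $\overline{C}_{p,1}(r)=0$ for $r<0$; the right-hand side is unambiguous because the numbers $(pm^2-(p-2)m)/2$ are pairwise distinct.

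\emph{The contradiction.} Suppose $\overline{C}_{p,1}(n)$ were odd for only finitely many $n$, and let $n_0$ be the largest such $n$ (it exists since $\overline{C}_{p,1}(0)=1$). Choose $\ell\ge 2$ with $\ell\equiv 2\pmod 3$ and $2\ell-1>n_0$, and set $N=\frac{\ell(3\ell-1)}{2}$. By the arithmetic claim, $2N=\ell(3\ell-1)$ is not of the form $pm^2\pm m(p-2)$, so the displayed sum is $\equiv 0\pmod 2$. The term $k=\ell$ equals $\overline{C}_{p,1}(0)=1$, and it is the only term with vanishing argument. For any other $k$ with $N-\frac{k(3k-1)}{2}\ge 0$, the generalized pentagonal number $\frac{k(3k-1)}{2}$ is strictly less than $N$; since a routine check of the ordering of generalized pentagonal numbers shows the one immediately below $N$ is $\frac{(\ell-1)(3\ell-2)}{2}=N-(2\ell-1)$ — which reproduces the interval $\bigl[2\ell-1,\tfrac{\ell(3\ell-1)}{2}\bigr]$ of the abstract — its argument is at least $2\ell-1>n_0$, so $\overline{C}_{p,1}$ of it is even. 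Hence the sum is $\equiv 1\pmod 2$, a contradiction; therefore $\overline{C}_{p,1}(n)$ is odd for infinitely many $n$.

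\emph{Main obstacle.} The essential step is the second one: shaping the modulo-$2$ identity so that, after multiplication by the pentagonal series, the chosen index $N=\frac{\ell(3\ell-1)}{2}$ is met by precisely one term, $\overline{C}_{p,1}(0)$, while every competing pentagonal shift is pushed beyond $n_0$. The arithmetic claim and the pentagonal-gap estimate are then elementary.
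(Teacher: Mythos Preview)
Your proof is correct and follows essentially the same approach as the paper: both reduce modulo $2$ via the Jacobi triple product (the paper's Lemma~\ref{lemma1}), verify the arithmetic claim by a modulo-$3$ check, and evaluate the resulting recurrence at $N=\frac{\ell(3\ell-1)}{2}$ so that the term $\overline{C}_{p,1}(0)=1$ is isolated while every other nonzero argument lands in $[2\ell-1,N]$. The only packaging difference is that the paper first records this as a separate lemma (Lemma~\ref{theorem111}) and then iterates via the recursion $a_{k+1}=\tfrac12 a_k(3a_k-1)$ to manufacture infinitely many disjoint intervals, whereas you collapse these steps into a single contradiction by choosing $\ell$ large enough to overshoot a hypothetical largest odd index $n_0$.
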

 \begin{corollary}\label{theorem111.coro2}
 For every positive integer $n$ and prime number $p\geq 5$, we have
 	\begin{align*}
 	\left\{1\leq n \leq X: \overline{C}_{p, 1}(n) ~ \text{is an odd integer} \right\} \geq \beta \log \log X,
 	\end{align*}
 	where $\beta>0$ is a constant.
 \end{corollary}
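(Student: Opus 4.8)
The plan is to combine the interval localization underlying Theorem~\ref{theorem111.coro1} with a doubly-exponential interval-packing argument. First I would record where the odd values come from. Since $(-q;q^p)_\infty\equiv(q;q^p)_\infty$ and $(-q^{p-1};q^p)_\infty\equiv(q^{p-1};q^p)_\infty\pmod 2$, multiplying the generating function by $(q;q)_\infty$ and invoking the Jacobi triple product gives
\begin{equation*}
(q;q)_\infty\sum_{n\ge 0}\overline{C}_{p,1}(n)\,q^n=\sum_{m\in\mathbb{Z}}q^{(pm^2-m(p-2))/2}.
\end{equation*}
Equating coefficients of $q^{\ell(3\ell-1)/2}$ via Euler's pentagonal number theorem $(q;q)_\infty=\sum_{k\in\mathbb{Z}}(-1)^k q^{k(3k-1)/2}$, using $\overline{C}_{p,1}(0)=1$, and invoking Theorem~\ref{theorem111.coro1} (which forces the coefficient on the right to vanish when $\ell\equiv 2\pmod 3$), I would obtain that the sum of the values $\overline{C}_{p,1}(m)$ over the nonzero differences $m=\tfrac{\ell(3\ell-1)}{2}-g_k$ between $\tfrac{\ell(3\ell-1)}{2}$ and the smaller generalized pentagonal numbers $g_k=\tfrac{k(3k-1)}{2}$ is $\equiv 1\pmod 2$. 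These $2\ell-1$ differences all lie in $\bigl[2\ell-1,\tfrac{\ell(3\ell-1)}{2}\bigr]$ (the smallest being $2\ell-1$, from $g_{1-\ell}=\tfrac{(\ell-1)(3\ell-2)}{2}$), so at least one $n$ in this interval has $\overline{C}_{p,1}(n)$ odd, for every integer $\ell\ge 2$ with $\ell\equiv 2\pmod 3$.

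Next I would pick a sparse sequence of admissible $\ell$'s yielding pairwise disjoint intervals. Set $\ell_1=2$ and inductively let $\ell_{j+1}$ be the least integer congruent to $2$ modulo $3$ that exceeds $\tfrac{\ell_j(3\ell_j-1)}{2}$; then $2\ell_{j+1}-1>\tfrac{\ell_j(3\ell_j-1)}{2}$, so the intervals $I_j:=\bigl[2\ell_j-1,\tfrac{\ell_j(3\ell_j-1)}{2}\bigr]$ are disjoint and strictly increasing. Because $\ell_j^2\le\ell_{j+1}\le\tfrac{\ell_j(3\ell_j-1)}{2}+3\le 2\ell_j^2$, a short induction gives $\log\log\ell_j=j\log 2+O(1)$, so $\max I_j=\tfrac{\ell_j(3\ell_j-1)}{2}$ grows doubly exponentially in $j$. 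Hence, for all large $X$, the number $J(X)$ of indices $j$ with $I_j\subseteq[1,X]$ satisfies $J(X)\ge\log_2\log X-O(1)$.

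By the localization above, each $I_j$ with $j\le J(X)$ contributes an integer $n_j\in[1,X]$ with $\overline{C}_{p,1}(n_j)$ odd, and disjointness of the $I_j$ makes the $n_j$ distinct; therefore $\#\{1\le n\le X:\overline{C}_{p,1}(n)\ \text{is odd}\}\ge J(X)\ge\beta\log\log X$ for a suitable absolute constant $\beta>0$, taken small enough to absorb the $O(1)$ term and the finitely many small values of $X$. Corollary~\ref{theorem222.coro2} would follow identically from Theorem~\ref{theorem222.coro1}, using instead the intervals $\bigl[\ell,\tfrac{\ell(3\ell+1)}{2}\bigr]$ with $\ell\equiv 1\pmod 3$. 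The only genuinely nontrivial ingredient is the Diophantine fact that is the content of Theorem~\ref{theorem111.coro1} — that $\ell(3\ell-1)$ with $\ell\equiv 2\pmod 3$ is never of the form $pm^2\pm m(p-2)$ — which I am assuming; everything else is the doubly-exponential bookkeeping, whose one subtlety is checking that imposing $\ell\equiv 2\pmod 3$ costs only a bounded additive shift in $\ell_{j+1}$, so I do not anticipate a real obstacle at this last step.
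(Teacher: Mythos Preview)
Your proof is correct and follows essentially the same route as the paper: both arguments use the localization of an odd value of $\overline{C}_{p,1}$ in the interval $\bigl[2\ell-1,\tfrac{\ell(3\ell-1)}{2}\bigr]$ for $\ell\equiv 2\pmod 3$, then build a doubly-exponential sequence of such $\ell$'s and count how many fit below $X$. The only noteworthy difference is that the paper takes the recurrence $a_{k+1}=\tfrac{1}{2}a_k(3a_k-1)$ itself as the sequence, observing that $a_k\equiv 2\pmod 3$ implies $a_{k+1}\equiv 2\pmod 3$ automatically (since $2a_{k+1}\equiv -a_k\equiv 1\pmod 3$), so your ``least integer $\equiv 2\pmod 3$ exceeding $\tfrac{\ell_j(3\ell_j-1)}{2}$'' step is unnecessary; otherwise the bookkeeping and the final $a_k\le 2^{2^k}$ estimate are the same.
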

We use  $Mathematica$ \cite{mathematica} for our computations.


\section{Proof of Theorem~\ref{theorem222.coro1} and Theorem~\ref{theorem111.coro1} } 
 To prove our main results 
 we need the following lemmas. 
 \begin{lemma}\label{lemma1}
 	For any positive integer $n$, we have 
 	\begin{align*}
 &\sum_{s=0}^{\infty}\overline{C}_{k, i}\left(n-\frac{s(3s-1)}{2}\right) +\sum_{s=1}^{\infty}\overline{C}_{k, i}\left(n-\frac{s(3s+1)}{2}\right)\\
 &\equiv \begin{cases}
      1\Mod{2} & \text{if $n=\frac{1}{2}\big(km^2\pm m(k-2i)\big)$ ~~~ for some $m\in \mathbb{N}$},\\
      0\Mod{2} & \text{otherwise}.
\end{cases}
\end{align*}
 \end{lemma}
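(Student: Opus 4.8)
The plan is to convert the product formula for $\sum_n\overline{C}_{k,i}(n)q^n$ into a $\bmod\,2$ identity by multiplying through by $(q;q)_\infty$ and comparing the coefficient of $q^n$ on both sides. On the left, multiplying $\sum_{n\ge0}\overline{C}_{k,i}(n)q^n=\dfrac{(q^k;q^k)_\infty(-q^i;q^k)_\infty(-q^{k-i};q^k)_\infty}{(q;q)_\infty}$ by $(q;q)_\infty$ and inserting Euler's pentagonal number theorem in the split form
\[
(q;q)_\infty=\sum_{s=0}^{\infty}(-1)^sq^{s(3s-1)/2}+\sum_{s=1}^{\infty}(-1)^sq^{s(3s+1)/2},
\]
the coefficient of $q^n$ on the left is $\sum_{s\ge0}(-1)^s\overline{C}_{k,i}\!\left(n-\tfrac{s(3s-1)}{2}\right)+\sum_{s\ge1}(-1)^s\overline{C}_{k,i}\!\left(n-\tfrac{s(3s+1)}{2}\right)$; reducing modulo $2$ erases the signs and gives precisely the left-hand side of the Lemma.

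It remains to understand, modulo $2$, the right-hand product $P(q):=(q^k;q^k)_\infty(-q^i;q^k)_\infty(-q^{k-i};q^k)_\infty$. I would write $P(q)=\prod_{n\ge1}(1-q^{kn})(1+q^{kn-(k-i)})(1+q^{kn-i})$ and match it against the Jacobi triple product $\sum_{m\in\mathbb{Z}}z^mQ^{m^2}=\prod_{n\ge1}(1-Q^{2n})(1+zQ^{2n-1})(1+z^{-1}Q^{2n-1})$ via the substitution $Q=q^{k/2}$, $z=q^{i-k/2}$, which collapses $P(q)$ to
\[
P(q)=\sum_{m\in\mathbb{Z}}q^{\frac12\left(km^2-m(k-2i)\right)}.
\]
Hence the coefficient of $q^n$ in $P(q)$ is the number of $m\in\mathbb{Z}$ with $\tfrac12(km^2-m(k-2i))=n$; separating $m>0$, $m<0$ (the term $m=0$ contributes only to $n=0$) rewrites this as the number of pairs $(m,\pm)$ with $m\in\mathbb{N}$ and $n=\tfrac12\left(km^2\pm m(k-2i)\right)$.

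The decisive observation is that for a positive integer $n$ this count is at most $1$: if $\tfrac12(km_1^2-m_1(k-2i))=\tfrac12(km_2^2-m_2(k-2i))$ with $m_1\ne m_2$ in $\mathbb{Z}$, then cancelling $m_1-m_2$ gives $k(m_1+m_2)=k-2i$, and since $1\le i\le\lfloor k/2\rfloor$ forces $0\le k-2i<k$, the integer $m_1+m_2$ would have to equal the fraction $(k-2i)/k$, which is impossible when $2i<k$ (the boundary case $2i=k$, irrelevant to $\overline{C}_{p,1}$ with $p\ge5$ prime, is handled separately). Therefore the coefficient of $q^n$ in $P(q)$ is odd exactly when $n$ has the stated pentagonal-type form $\tfrac12(km^2\pm m(k-2i))$ and even otherwise; equating this with the left-hand computation modulo $2$ finishes the proof. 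I expect this uniqueness/multiplicity bookkeeping to be the only delicate point --- it is exactly where the hypothesis $1\le i\le\lfloor k/2\rfloor$ is needed --- with the rest being routine manipulation of $q$-series and the two classical product identities.
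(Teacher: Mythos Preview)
Your approach is essentially the same as the paper's: both multiply the generating function by $(q;q)_\infty$, expand the left side via Euler's pentagonal number theorem (reduced mod~$2$), and recognize the remaining product $(q^k;q^k)_\infty(-q^i;q^k)_\infty(-q^{k-i};q^k)_\infty$ as a theta series $\sum_{m\in\mathbb{Z}}q^{(km^2-m(k-2i))/2}$ via the Jacobi triple product (the paper quotes this as Andrews' identity rather than deriving it, but it is the same formula). Comparing coefficients of $q^n$ then gives the lemma.

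The one genuine addition in your write-up is the uniqueness argument showing that a positive integer $n$ admits at most one representation $n=\tfrac12(km^2\pm m(k-2i))$ with $m\in\mathbb{N}$. The paper simply asserts the result after ``comparing coefficients,'' but as you observe, the stated parity conclusion actually requires that the coefficient of $q^n$ in the theta series is $0$ or $1$, not merely nonzero. Your argument (cancelling $m_1-m_2$ to force $m_1+m_2=(k-2i)/k\notin\mathbb{Z}$) is correct when $2i<k$ and indeed plugs a small gap the paper leaves implicit; your remark that the boundary case $2i=k$ needs separate treatment is also accurate, and irrelevant for the applications to $\overline{C}_{p,1}$ with $p\ge5$ prime.
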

 	\begin{proof}
    Ramanujan's general theta function $f(a, b)$ is defined as
 	\begin{align*}
 	f(a, b)=\sum_{n=-\infty}^{\infty}a^{\frac{n(n+1)}{2}}b^{\frac{n(n-1)}{2}} ~~~\text{for}~~~ |ab|<1.
 	\end{align*}
 	The Jacobi triple product identity \cite[Entry 19, p. 36]{Berndt1991} takes the shape 
 	\begin{align*}
 	\index{$f(a,b)$}	f(a,b)=(-a;ab)_{\infty}(-b;ab)_{\infty}(ab;ab)_{\infty}.
 	\end{align*}
One of the most important special cases of $f(a,b)$ is
 	\begin{align*}
f(-q, -q^2)=(q;q)_{\infty}=\sum_{n=-\infty}^{\infty}(-1)^nq^{\frac{n(3n-1)}{2}}.
 	\end{align*}
 	Therefore we have
 	\begin{align}\label{2.1}
 	(q;q)_{\infty}\equiv \sum_{n=0}^{\infty}q^{\frac{n(3n-1)}{2}}+\sum_{n=1}^{\infty}q^{\frac{n(3n+1)}{2}} \Mod {2}.
 	\end{align}
The following identity is due to Andrews~\cite{andrews2015}	
 	\begin{align*}
	\sum_{n=0}^{\infty}\overline{C}_{k, i}(n)q^n 	&=\frac{1}{(q; q)_{\infty}}\left( \sum_{n=-\infty}^{\infty}q^{\frac{k(n^2-n)}{2}+in} \right)\\
	&=\frac{1}{(q; q)_{\infty}}\left(\displaystyle\sum_{n=0}^{\infty}q^{\frac{k(n^2-n)}{2}+in}+\displaystyle\sum_{n=1}^{\infty}q^{\frac{k(n^2+n)}{2}-in}\right).
\end{align*}
 Using \eqref{2.1} along with the above equation we obtain
 	\begin{align*}
 	\sum_{n=0}^{\infty}\overline{C}_{k, i}(n) q^n  \equiv \frac{\displaystyle\sum_{n=0}^{\infty}q^{\frac{k(n^2-n)}{2}+in}+\displaystyle\sum_{n=1}^{\infty}q^{\frac{k(n^2+n)}{2}-in}}{ \displaystyle \sum_{n=0}^{\infty}q^{\frac{n(3n-1)}{2}}+\sum_{n=1}^{\infty}q^{\frac{n(3n+1)}{2}}} \Mod{2}.
 	\end{align*}
 	Therefore,  
 	\begin{align*}
 	&\sum_{n=0}^{\infty}\left(\sum_{k=0}^{\infty}\overline{C}_{k, i}\left(n-\frac{s(3s-1)}{2}\right) +\sum_{k=1}^{\infty}\overline{C}_{k, i}\left(n-\frac{s(3s+1)}{2}\right)\right)q^n\\
 	&\equiv \displaystyle\sum_{n=0}^{\infty}q^{\frac{k(n^2-n)}{2}+in}+\displaystyle\sum_{n=1}^{\infty}q^{\frac{k(n^2+n)}{2}-in} \Mod{2}.
 	\end{align*}
 	Hence, for any positive integer $n$, comparing the coefficients of $q^n$ on both sides of the above congruence, we get the result.
 	\end{proof}
 \begin{lemma}\label{theorem222} Suppose $k$ and  $\ell\geq 2$ are positive integers. If $\ell(3\ell+1)$ is not of the form $\big(km^2 \pm  m(k-2i)\big)$  for any positive integers $m$ and $i$ such that $1\leq i \leq \left\lfloor\frac{k}{2}\right\rfloor$, then there exists an integer $n\in \left[\ell, \frac{\ell(3\ell+1)}{2}\right]$ such that $\overline{C}_{k, i}(n)$ is an even integer.  
 \end{lemma}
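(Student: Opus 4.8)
The plan is to invoke Lemma~\ref{lemma1} a single time, at the value $n=N:=\frac{\ell(3\ell+1)}{2}$, and then to read off the conclusion by a one-line parity count. Observe first that $N=\frac{m(3m+1)}{2}$ with $m=\ell$ is itself a generalized pentagonal number, so it appears among the shifts on the left-hand side of Lemma~\ref{lemma1}.

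Fix $i$ with $1\le i\le\lfloor k/2\rfloor$. By hypothesis $2N=\ell(3\ell+1)$ is not of the form $km^2\pm m(k-2i)$ for any $m\in\mathbb{N}$, so $N$ is not of the form $\frac{1}{2}\big(km^2\pm m(k-2i)\big)$, and Lemma~\ref{lemma1} gives
\begin{align*}
\sum_{s=0}^{\infty}\overline{C}_{k,i}\!\left(N-\frac{s(3s-1)}{2}\right)+\sum_{s=1}^{\infty}\overline{C}_{k,i}\!\left(N-\frac{s(3s+1)}{2}\right)\equiv 0\Mod 2 .
\end{align*}
With the usual conventions $\overline{C}_{k,i}(m)=0$ for $m<0$ and $\overline{C}_{k,i}(0)=1$ (the constant term of the generating function is $1$), only the generalized pentagonal numbers $g$ with $0\le g\le N$ contribute on the left, and there are exactly $2\ell+1$ of them: the number $0$, together with $\frac{m(3m-1)}{2}$ and $\frac{m(3m+1)}{2}$ for $1\le m\le\ell$. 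Listed in decreasing order, the two largest are $\frac{\ell(3\ell+1)}{2}=N$, which contributes $\overline{C}_{k,i}(0)=1$, and $\frac{\ell(3\ell-1)}{2}=N-\ell$, which contributes $\overline{C}_{k,i}(\ell)$; every other such $g$ satisfies $g\le\frac{(\ell-1)(3\ell-2)}{2}$, hence produces an argument $N-g\ge 3\ell-1>\ell$. Therefore the $2\ell$ summands other than $\overline{C}_{k,i}(0)$ are $\overline{C}_{k,i}(n)$ for $2\ell$ distinct integers $n$ lying in $\left[\ell,\frac{\ell(3\ell+1)}{2}\right]$.

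Moving the term $\overline{C}_{k,i}(0)=1$ to the other side of the displayed congruence, we conclude that the sum of these $2\ell$ values $\overline{C}_{k,i}(n)$ is $\equiv 1\Mod 2$. If all $2\ell$ of them were odd, their sum would be $\equiv 2\ell\equiv 0\Mod 2$, which is impossible; hence at least one of them is even. This exhibits an integer $n\in\left[\ell,\frac{\ell(3\ell+1)}{2}\right]$ with $\overline{C}_{k,i}(n)$ even, which is the assertion.

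I do not expect any serious obstacle. The two points that need attention are: (i) passing the non-representability hypothesis through the factor of $2$ so that the right-hand side of Lemma~\ref{lemma1} is forced to be $0$ modulo $2$ at $n=N$; and (ii) checking that every contributing shift $N-g$ other than the one equal to $0$ lies in $[\ell,N]$, which comes down to the elementary identity $\frac{\ell(3\ell+1)}{2}-\frac{\ell(3\ell-1)}{2}=\ell$ together with the spacing of the smaller generalized pentagonal numbers. The companion statement used for Theorem~\ref{theorem111.coro1} would be proved the same way by instead applying Lemma~\ref{lemma1} at $n=\frac{\ell(3\ell-1)}{2}$ (again a generalized pentagonal number, non-representable by hypothesis); there one obtains $2\ell-1$ summands besides $\overline{C}_{k,i}(0)$, with arguments in $\left[2\ell-1,\frac{\ell(3\ell-1)}{2}\right]$, and the analogous count — an odd number of terms summing to $1$ modulo $2$ — forces one of those values to be odd.
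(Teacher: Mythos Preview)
Your proof is correct and follows essentially the same approach as the paper: both apply Lemma~\ref{lemma1} at $n=\frac{\ell(3\ell+1)}{2}$, separate off the single term $\overline{C}_{k,i}(0)=1$, verify that the remaining $2\ell$ arguments $N-g$ land in $\left[\ell,\frac{\ell(3\ell+1)}{2}\right]$, and conclude by the parity count that not all of them can be odd. The only cosmetic difference is that the paper packages the last step as a contradiction argument while you state it directly.
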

	
\begin{proof} We will prove by contradiction. Let us consider an integer $\ell\geq2$ such that $\ell(3\ell+1)$ is not of the form $\big(km^2 \pm  m(k-2i)\big)$ for any positive integer $m$. Suppose that $\overline{C}_{k, i}(n)$ is odd  when $n\in \left[\ell, \frac{\ell(3\ell+1)}{2}\right]$. We define $\mathcal{G}(k):= \frac{\ell(3\ell+1)}{2}-\frac{k(3k-1)}{2}$  and $\mathcal{H}(k):= \frac{\ell(3\ell+1)}{2}-\frac{k(3k+1)}{2}$ for an integer $k$.  From Lemma~\ref{lemma1}, we have 
	\begin{align}\label{2.4}
	\sum_{k=0}^{\infty}\overline{C}_{k, i}\left(\mathcal{G}(k)\right) +\sum_{k=1}^{\infty}\overline{C}_{k, i}\left(\mathcal{H}(k)\right) \equiv 0 \Mod{2}.
	\end{align} 	
It is easy to check that $\mathcal{G}(k)$ and $\mathcal{H}(k)$ are negative for all $k\geq \ell+1$.  Since $\overline{C}_{k, i}(n)=0$ for all negative integers $n$, it follows  from \eqref{2.4} that 
	\begin{align} \label{new2.2.1}
	&\sum_{k=0}^{\ell}\overline{C}_{k, i}\left(\mathcal{G}(k)\right) +\sum_{k=1}^{\ell}\overline{C}_{k, i}\left(\mathcal{H}(k)\right) \\
\notag	=&\sum_{k=0}^{\ell}\overline{C}_{k, i}\left(\mathcal{G}(k)\right) +\sum_{k=1}^{\ell-1}\overline{C}_{k, i}\left(\mathcal{H}(k)\right) + \overline{C}_{k, i}\left(\mathcal{H}(\ell)\right).
	\end{align}
For any  fixed positive integer $\ell\geq2$, $\mathcal{H}(k)$ is decreasing functions of $k$. Note that  $\mathcal{H}(0)=\frac{\ell(3\ell+1)}{2}$ and
\begin{align*}
    \mathcal{H}(\ell-m)=\frac{1}{2}\big(6\ell m-3m^2+m\big)\geq\frac{1}{2}\big(6\ell -3\ell+4\big)\geq \ell,
\end{align*}
where $m \in \{1, 2, \cdots, \ell-1\}.$
Therefore each $\mathcal{H}(k) \in \left[\ell, \frac{\ell(3\ell+1)}{2}\right]$ for $k \in \{1,2, \dots, \ell-1\}$. In a similar manner, for each $k \in \{0,1,2, \dots, \ell\}$ we can show that each $\mathcal{G}(k) \in \left[\ell, \frac{\ell(3\ell+1)}{2}\right]$. 
By our assumption 
	$\displaystyle\sum_{k=0}^{\ell}\overline{C}_{k, i}\left(\mathcal{G}(k)\right)+\displaystyle\sum_{k=1}^{\ell-1}\overline{C}_{k, i}\left(\mathcal{H}(k)\right)$ is even since  it is a sum of  $(\ell+1)+(\ell-1)=2\ell$ odd numbers. Also we have $\overline{C}_{k, i}\left(\mathcal{H}(\ell)\right)=\overline{C}_{k, i}\left(0\right)=1$. Consequently, the summation  \eqref{new2.2.1} is  odd for $\ell\geq 2$, which is a contradiction to the fact \eqref{2.4}. This concludes our result.
\end{proof}
\begin{proof}[Proof of Theorem~\ref{theorem222.coro1}]
Suppose $\ell\geq 2$ is a positive integer with $\ell \equiv 1\Mod 3$.
For a fixed $\ell$ and a prime number $p\geq 5$, suppose that
$\ell(3\ell+1)= pm^2\pm(p-2)m$ for some $m\in \mathbb{N}$.
Therefore $(p-2)^2+4p\ell(3\ell+1)$ must be a square of an integer.
Then, there exists a positive number $v$ such that
\begin{align}
  \label{coro1.eq1}  v(v+p-2)=p\ell(3\ell+1).
\end{align}
Notice that either $p|v$ or $p|(v+p-2)$. If $p|v$ then $v=ps$ for some $s\in \mathbb{N}$ and from \eqref{coro1.eq1} we have  $ps(ps+p-2)=p\ell(3\ell+1)$. Since $\text{gcd}(p,3)=1$, we obtain 
\begin{align}
  \label{coro1.eq2} s(ps+p-2)&\equiv \ell\Mod3.
\end{align}
The prime $p$ can be considered in two ways like $p\equiv 1 \Mod 3$ or $p\equiv 2 \Mod 3$ and similarly   $s$ can be considered in three ways modulo $3$. 
So there are total six possibility for the pair $(p,s)$ modulo $3$ and it is easy to check that the number $s(ps+p-2)$ is either multiple of $3$ or $s(ps+p-2)\equiv 2\Mod3$. Which contradicts the fact \eqref{coro1.eq2} as $\ell \equiv 1\Mod 3$.\\
Again if $p|(v+p-2)$ then $(v+p-2)=ps$ for some $s\in \mathbb{N}$. Since $\text{gcd}(p,3)=1$, we obtain 
\begin{align*}
   s(ps-p+2)&\equiv \ell\Mod3.
\end{align*}
Then by a similar argument as above we get a contradiction.\\

To prove the second part of the corollary, for $k\in \mathbb{N}$,  we consider 
\begin{align}\label{rec_a_k}
\begin{cases}
 ~& a_0 =\ell,\\ 
  ~& a_{k} = \frac{1}{2}a_{k-1}(3a_{k-1}+1).
\end{cases} 
\end{align}
It is easy to check that for all positive integer $k$, we have  $a_{2k} \equiv 1 \Mod 3$ as $\ell \equiv 1\Mod 3$. Thus, using Lemma~\ref{theorem222}, there exists an integer $n\in [a_{2k}, a_{2k+1}]$ such that $\overline{C}_{p, 1}(n)$ is an even integer. Thus choosing a suitable point from each intervals of   $\bigcup\limits_{m=0}^{\infty}[a_{2m}, a_{2m+1}]
$,  we readily obtain the result.
\end{proof}
\begin{proof}[Proof of Corollary~\ref{theorem222.coro2}] Suppose $n$ is a positive integer. Now we count the number of elements in the set  $$\left\{1\leq n \leq X: \overline{C}_{p, 1}(n)~ \text{is an even integer} \right\}.$$
Next we consider \eqref{rec_a_k} for $a_0=4$ and partition the interval $[1,X]$ as follows
$$[1,X]=[1,4)\cup[a_0,a_2)\cup[a_2,a_4)\cup\dots\cup[a_{2k-2},a_{2k})\cup \dots \cup[a_{\nu},X],$$ 
where $\nu$  is the largest integer such that $a_{\nu}\leq X.$  By Lemma~\ref{theorem222} we can find a positive number $n$ in $\left[a_{2(k-1)}, a_{2k-1}\right]$ such that $\overline{C}_{p, 1}(n)$ is an even integer. Then the number of $n\leq X$  for which $\overline{C}_{p, 1}(n)$ is even  is at
least $\lfloor\nu/2\rfloor$. It remains to find the value of $\nu$ as a function of $X$. For all $k\geq 0$, we have
\begin{align*}
 a_{k} = \frac{a_{k-1}(3a_{k-1}+1)}{2} \leq 2 a_{k-1}^2 \leq 2^{2^{k-1}-1} a_1^{2^{k-1}}\leq 2^{2^k}.
\end{align*}
Since $a_{\nu}\leq X< a_{\nu+1}$,  we see that  $\nu\geq \alpha \log\log X$ for some  constant  $\alpha>0$.
\end{proof}

  \begin{lemma}\label{theorem111}
 Suppose $k$ and  $\ell\geq 2$ are positive integers. If $\ell(3\ell-1)$ is not of the form $\big(km^2 \pm  m(k-2i)\big)$  for any positive integer $m$ and $i$ such that $1\leq i \leq \left\lfloor\frac{k}{2}\right\rfloor$, then there exists an integer  $n\in \left[2\ell-1, \frac{\ell(3\ell-1)}{2}\right]$ such that $\overline{C}_{k, i}(n)$ is an odd integer. 
 \end{lemma}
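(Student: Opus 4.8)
The plan is to run the same contradiction scheme as in the proof of Lemma~\ref{theorem222}, but anchored at the pentagonal number $\frac{\ell(3\ell-1)}{2}$ rather than $\frac{\ell(3\ell+1)}{2}$. First I would suppose, for contradiction, that $\overline{C}_{k, i}(n)$ is \emph{even} for every integer $n\in\left[2\ell-1,\frac{\ell(3\ell-1)}{2}\right]$, and then feed $n=\frac{\ell(3\ell-1)}{2}$ into Lemma~\ref{lemma1}. Since $\ell(3\ell-1)$ is assumed not to be of the form $km^2\pm m(k-2i)$, the value $\frac{\ell(3\ell-1)}{2}$ is not of the form $\frac12\big(km^2\pm m(k-2i)\big)$, so Lemma~\ref{lemma1} gives $\sum_{s=0}^{\infty}\overline{C}_{k,i}\big(\mathcal{G}(s)\big)+\sum_{s=1}^{\infty}\overline{C}_{k,i}\big(\mathcal{H}(s)\big)\equiv 0\Mod 2$, where $\mathcal{G}(s):=\frac{\ell(3\ell-1)}{2}-\frac{s(3s-1)}{2}$ and $\mathcal{H}(s):=\frac{\ell(3\ell-1)}{2}-\frac{s(3s+1)}{2}$.

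The next step is to identify which terms actually occur. Because $x(3x-1)/2$ and $x(3x+1)/2$ are increasing for $x>0$ and $\overline{C}_{k,i}$ vanishes on the negative integers, only the terms with $0\le s\le\ell$ (in the first sum) and $1\le s\le\ell-1$ (in the second sum) survive; that is $(\ell+1)+(\ell-1)=2\ell$ terms. Exactly one of them is exceptional, namely $\mathcal{G}(\ell)=0$, which contributes $\overline{C}_{k,i}(0)=1$, and I would peel this off. It then remains to check that the remaining $2\ell-1$ arguments, namely $\mathcal{G}(0),\dots,\mathcal{G}(\ell-1)$ together with $\mathcal{H}(1),\dots,\mathcal{H}(\ell-1)$, all lie in $\left[2\ell-1,\frac{\ell(3\ell-1)}{2}\right]$. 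The clean values to record are $\mathcal{G}(0)=\frac{\ell(3\ell-1)}{2}$, $\mathcal{G}(\ell-1)=3\ell-2$, $\mathcal{H}(1)=\frac{\ell(3\ell-1)}{2}-2$ and $\mathcal{H}(\ell-1)=2\ell-1$; combined with the monotonicity of $\mathcal{G}$ and $\mathcal{H}$ in $s$ (decreasing for $s\ge1$) and the elementary inequality $3\ell-2\ge 2\ell-1$, this pins every one of the $2\ell-1$ surviving arguments inside the interval, with the lower endpoint $2\ell-1$ actually attained at $\mathcal{H}(\ell-1)$ and the upper endpoint at $\mathcal{G}(0)$.

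Finally I would assemble the contradiction: under the standing assumption each of the $2\ell-1$ surviving terms is even, so their sum is even, and adding back the peeled-off $\overline{C}_{k,i}(0)=1$ shows the full double sum is $\equiv 1\Mod 2$, contradicting the congruence $\equiv 0\Mod 2$ from the first step. Hence some $n$ in $\left[2\ell-1,\frac{\ell(3\ell-1)}{2}\right]$ must have $\overline{C}_{k,i}(n)$ odd.

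The part I expect to demand the most care is the interval bookkeeping of the second paragraph: unlike Lemma~\ref{theorem222}, the left endpoint here is $2\ell-1$ rather than $\ell$, and this sharper bound is exactly what the argument forces, since the smallest surviving argument $\mathcal{H}(\ell-1)$ equals $2\ell-1$ — so one must verify this minimum and also confirm that $\mathcal{G}(\ell-1)=3\ell-2$ does not slip below it. One must likewise be careful that it is $\mathcal{G}(\ell)=0$ (not some $\mathcal{H}(\cdot)$ term, as in Lemma~\ref{theorem222}) that gets separated off, so that the count of remaining terms is the odd number $2\ell-1$; that parity, together with $\overline{C}_{k,i}(0)=1$, is precisely what produces the contradiction. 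Everything else is the theta-function input of Lemma~\ref{lemma1} and the triviality that halving preserves ``not of the given form''.
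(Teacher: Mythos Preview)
Your argument is correct and is essentially the paper's own proof: the paper defines $\mathcal{S}(k)$ and $\mathcal{T}(k)$ exactly as your $\mathcal{G}(s)$ and $\mathcal{H}(s)$, peels off $\overline{C}_{k,i}(\mathcal{S}(\ell))=\overline{C}_{k,i}(0)=1$, and verifies the same four boundary values to trap the remaining arguments in $\left[2\ell-1,\tfrac{\ell(3\ell-1)}{2}\right]$. One small correction to your closing commentary: the fact that the count of surviving terms is the \emph{odd} number $2\ell-1$ plays no role here, since each surviving term is assumed \emph{even} and a sum of even numbers is even regardless of how many there are; the count parity mattered in Lemma~\ref{theorem222} only because there the terms were assumed odd.
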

\begin{proof} We will give a proof using the method of contradiction. Consider a  positive integer $\ell\geq 2$ such that $\ell(3\ell-1)$ is not of the form $\big(km^2 \pm  m(k-2i)\big)$  for any positive integer $m$. Assume that $\overline{C}_{k, i}(n)$ is an even number when $n\in \left[2\ell-1, \frac{\ell(3\ell-1)}{2}\right]$. For an integer $k$, let us consider $\mathcal{S}(k):= \frac{\ell(3\ell-1)}{2}-\frac{k(3k-1)}{2}$  and $\mathcal{T}(k):= \frac{\ell(3\ell-1)}{2}-\frac{k(3k+1)}{2}$.  From Lemma~\ref{lemma1} we have 
	\begin{align}\label{2.2}
	\sum_{k=0}^{\infty}\overline{C}_{k, i}\left(\mathcal{S}(k)\right) +\sum_{k=1}^{\infty}\overline{C}_{k, i}\left(\mathcal{T}(k)\right) \equiv 0 \Mod{2}.
	\end{align} 	
It is easy to check that, for all $k\geq \ell+1$ the value of $\mathcal{S}(k)<0$, and  $\mathcal{T}(k)<0$ for all $k\geq\ell$. Since $\overline{C}_{k, i}(n)=0$ for all negative integers $n$, it follows from \eqref{2.2} that
	\begin{align} \label{2.3}
	\notag	&\sum_{k=0}^{\ell}\overline{C}_{k, i}\left(\mathcal{S}(k)\right) +\sum_{k=1}^{\ell-1}\overline{C}_{k, i}\left(\mathcal{T}(k)\right) \\
	\notag=&\sum_{k=0}^{\ell-1}\overline{C}_{k, i}\left(\mathcal{S}(k)\right) +\sum_{k=1}^{\ell-1}\overline{C}_{k, i}\left(\mathcal{T}(k)\right) + \overline{C}_{k, i}\left(\mathcal{S}(\ell)\right) \\
	=&\sum_{k=0}^{\ell-1}\overline{C}_{k, i}\left(\mathcal{S}(k)\right) +\sum_{k=1}^{\ell-1}\overline{C}_{k, i}\left(\mathcal{T}(k)\right) +1.
	\end{align}
Note that, for any  fixed positive integer $\ell\geq2$, $\mathcal{S}(k)$ and $\mathcal{T}(k)$ are decreasing functions of $k$. We check that  $\mathcal{S}(0),~\mathcal{S}(\ell-1), ~\mathcal{T}(1)~\text{and}~\mathcal{T}(\ell-1) \in \left[2\ell-1, \frac{\ell(3\ell-1)}{2}\right].$
Thus, for $\ell\geq2$ and $k\in \{1,2, \dots, \ell-1\}$ the values of  $\mathcal{S}(k),~\mathcal{T}(k)$, and $\mathcal{S}(0)$ lies in the interval $\left[2\ell-1, \frac{\ell(3\ell-1)}{2}\right]$. By our assumption
	$\displaystyle\sum_{k=0}^{\ell-1}\overline{C}_{k, i}\left(\mathcal{S}(k)\right)$ and $\displaystyle\sum_{k=1}^{\ell-1}\overline{C}_{k, i}\left(\mathcal{T}(k)\right) $ are even numbers. Therefore the summation \eqref{2.3} 
	is an odd number, which is a contradiction to \eqref{2.2}. This concludes our result.
\end{proof}
\begin{proof}[Proof of Theorem~\ref{theorem111.coro1}]
Considering $\ell \equiv 2 \Mod{3}$ and proceeding in a similar fashion as in the proof of the first part of Theorem~\ref{theorem222.coro1}, we readily obtain the first part of the Theorem~\ref{theorem111.coro1}.
For the second part of the corollary  we consider 
\begin{align}\label{rec_a_k.2}
\begin{cases}
 ~& a_1 =\ell\equiv 2 \Mod{3} ~ \text{and}\\ 
~& a_{k+1} = \frac{1}{2}a_{k}(3a_{k}-1).
\end{cases} 
\end{align}
It is easy to check that for all positive integer $k$, we have  $a_{k} \equiv 2 \Mod 3$. Thus, using Lemma~ \ref{theorem111}, we can get a suitable point from each intervals of   $\bigcup\limits_{k=0}^{\infty}[2a_{k}-1, a_{k+1}]
$ such that $\overline{C}_{p, 1}(n)$ is an odd integer.
\end{proof}

\begin{proof}[Proof of Corollary ~\ref{theorem111.coro2}] Suppose $n$ is a positive number. Now we count the number of elements of the set  $$\left\{1\leq n \leq X: \overline{C}_{k, i}(n)~ \text{is an odd integer} \right\}.$$
Consider $a_1=2$ and  define $a_{k}$ as in \eqref{rec_a_k.2}. Let us take a partition the interval $[1,X]$ as follows
$$[1,X]=[1,a_2)\cup\dots\cup[a_{k-1},a_{k})\cup \dots \cup[a_{\nu},X],$$ 
where $\nu$  is the largest integer such that $a_{\nu}\leq X.$ From Lemma \ref{theorem111}  we can find a positive number $n$ in $[2a_k-1, a_{k+1}]\subset \left[a_{k}, a_{k+1}\right]$ such that $\overline{C}_{k, i}(n)$ is an odd integer. Then the number of $n\leq X$  for which $\overline{C}_{k, i}(n)$ is odd  is at
least $\lfloor\nu/2\rfloor$. It remains to find the value of $\nu$ as a function of $X$. Now for all $k\geq 0$ we get
\begin{align*}
 a_{k} = \frac{a_{k-1}(3a_{k-1} -1)}{2} \leq \frac{3}{2} a_{k-1}^2 \leq\left( \frac{3}{2}\right)^{2^{k-1}-1}a_1^{2^{k-1}}=\left( \frac{3}{2}\right)^{2^{k-1}-1}2^{2^{k-1}}\leq2^{2^k}
\end{align*}
and hence we get the result immediately.
\end{proof}
\subsection*{Concluding remarks}
Many important partition functions have generating functions which are quotients of theta functions, like the Andrews' singular overpartitions. One can use the  method employed in the article to study the parity results in general for those partition functions. 
For some prime $p\geq5,$
it will also be interesting to see an  arithmetic progression $an+b$ such that $\overline{C}_{p, 1}(an+b)$ is even (resp.\ odd). 

\bibliographystyle{plain}
\bibliography{cray}
\end{document}